\documentclass{amsart}
\usepackage{amsfonts}

\setcounter{MaxMatrixCols}{10}

\newtheorem{theorem}{Theorem}
\theoremstyle{plain}

\newtheorem{case}{Case}

\newtheorem{corollary}{Corollary}

\newtheorem{remark}{Remark}

\numberwithin{equation}{section}
\input{tcilatex}

\begin{document}
\title[polynomial identities]{Polynomial identities that involve binomial
coefficients, double and rising factorials and their probabilistic
interpretations and proofs.}
\author{Pawe\l\ J. Szab\l owski}
\address{Emeritus in Department of Mathematics and Information Sciences, \\
Warsaw University of Technology\\
ul Koszykowa 75, 00-662 Warsaw, Poland \\
}
\email{pawel.szablowski@gmail.com}
\urladdr{}
\date{May 2022}
\subjclass[2020]{Primary 05A10, 05D40; Secondary 11B65, 11B73}
\keywords{Binomial coefficient. Pochhammer symbol, bivariate normal
distribution , bivariate gamma distribution, Hermite polynomials, Laguerre
polynomials, Fibonacci Numbers, Lucas numbers}
\thanks{}

\begin{abstract}
We formulate several polynomial identities. One side of these identities has
a nice simple form. Whereas the other has a form of a polynomial whose
coefficients contain binomial coefficients double factorials or (and) rising
factorials. The origins and the proofs of these identities are
probabilistic. However, their form suggests universal applications in
simplifying expressions. Many useful simplifying formulae are presented in
the sequel.
\end{abstract}

\maketitle

\section{Introduction}

In this note, we will present some identities involving factorials, binomial
coefficients, and the so-called rising factorials (sometimes called
Pochhammer symbols). Many of them have the forms of polynomials whose
coefficients often have the form of binomial coefficients and (or) of rising
factorials of some additional variables. These variables appear on both
sides of the identity. The domains of these variables can be extended to all
complex numbers. All these identities have a common origin, except for the
sometimes similar form. Namely, the origins of all are a few, rather deep
probabilistic interpretations and sometimes following non-trivial
computations.

The paper is organized as follows. The next section is dedicated to the
presentation of the identities and some of the particular, interesting
particular cases. The next section is devoted to the presentation of the
probabilistic background of the results presented in the previous section
and then, finally, the presentation of the calculations leading to the
identities.

\section{Identities}

Let us set $(-1)!!\allowbreak =\allowbreak (0)!!\allowbreak =\allowbreak 1$
and $\binom{n}{k}\allowbreak =\allowbreak 0$ when $n<k$. $i$ would always
denote imaginary unit , i.e., $i\allowbreak =\allowbreak \sqrt{-1}%
\allowbreak =\allowbreak \exp (i\pi /2)$. In order to simplify notation let
us introduce the so-called rising factorial (sometimes called also
Pochhammer symbol), which is the following function:%
\begin{equation*}
(x)^{(n)}=x(x+1)\ldots (x+n-1),
\end{equation*}%
defined for all complex $x.$ Notice that we have for all $x\neq 0:$%
\begin{equation*}
(x)^{(n)}=\frac{\Gamma (x+n)}{\Gamma (x)},
\end{equation*}%
where $\Gamma (x)$ denotes the Euler's gamma function. To learn more about
Pochhammer symbols or binomial coefficients, see, e.g., \cite{Grad14}.
Notice only that, e.g., $\left( 1\right) ^{\left( n\right) }\allowbreak
=\allowbreak n!$ or $\left( 1/2\right) ^{\left( n\right) }\allowbreak
=\allowbreak (2n-1)!!/2^{n}$. We will use these values below.

\begin{theorem}
\label{Main}For all natural $k$ and all complex $\rho $ and $\beta $ we have:

i)%
\begin{equation}
\frac{(2k)!}{k!}(1+\rho )^{k}=\sum_{j=0}^{k}\binom{2k}{2j}(1+\rho
)^{2k-2j}(1-\rho ^{2})^{j}(2j-1)!!(2k-2j-1)!!.  \label{G1}
\end{equation}

ii)%
\begin{equation}
(1+\rho )^{k}=\frac{1}{2^{k}}\sum_{j=0}^{2k}\sum_{m=0}^{\left\lfloor
j/2\right\rfloor }(2\rho )^{j-2m}\binom{k}{j-2m}\binom{k-j+2m}{m}.
\label{G2}
\end{equation}

iii) For all integer $m$ we have 
\begin{equation}
\sum_{j=0}^{n}(-1)^{j}\binom{n}{j}\frac{\left( \beta \right) ^{\left(
j+m\right) }}{\left( \beta \right) ^{\left( j\right) }}=(-1)^{n}n!\binom{m}{n%
}\frac{\left( \beta \right) ^{\left( m\right) }}{\left( \beta \right)
^{\left( n\right) }}.  \label{Ex1}
\end{equation}

iv)%
\begin{gather}
\sum_{m=0}^{n}\left( -1\right) ^{m}\binom{n}{m}\left( \beta \right) ^{\left(
m\right) }\left( \beta \right) ^{\left( n-m\right) }\sum_{k=0}^{m}\binom{m}{k%
}\binom{n-m}{k}\frac{\rho ^{k}k!}{\left( \beta \right) ^{\left( k\right) }}
\label{Ex2} \\
=\left\{ 
\begin{array}{ccc}
0 & \text{if} & n\text{ is odd} \\ 
\frac{n!}{(n/2)!}\left( \beta \right) ^{\left( n/2\right) }(1-\rho )^{n/2} & 
\text{if} & n\text{ is even}%
\end{array}%
\right. .  \notag
\end{gather}

v) 
\begin{eqnarray}
&&\sum_{m=0}^{n}(-1)^{n-m}\binom{n}{m}\sum_{j=0}^{m}\binom{m}{j}(1-\rho
)^{j}\rho ^{m-j}\left( \beta \right) ^{\left( n-j\right) }\left( \beta
+m-j\right) ^{\left( j\right) }  \label{Ex3} \\
&=&\left\{ 
\begin{array}{ccc}
0 & \text{if} & n\text{ is odd} \\ 
\frac{n!}{(n/2)!}\left( \beta \right) ^{\left( n/2\right) }(1-\rho )^{n/2} & 
\text{if} & n\text{ is even}%
\end{array}%
\right. .  \notag
\end{eqnarray}
\end{theorem}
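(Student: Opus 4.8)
The five parts fall into two probabilistic families together with one purely algebraic statement, and the plan is to treat them in that grouping, in each case computing a single expectation in two ways. Parts (i) and (ii) I read from moments of a standard bivariate normal vector $(X,Y)$ with correlation $\rho$; part (iii) is a finite-difference identity that needs no probability; and parts (iv) and (v) I obtain as two different expansions of the one quantity $E[(X-Y)^{n}]$, where now $(X,Y)$ is a Kibble bivariate gamma pair with shape $\beta$ and correlation $\rho$. Since in every case both sides are polynomial in $\rho$ and polynomial or rational in $\beta$, it suffices to prove each identity for the range of real parameters on which the relevant model exists; equality then extends to all complex $\rho$ and $\beta$ by analytic continuation.

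For (i) I evaluate $E[(X+Y)^{2k}]$. As $X+Y\sim N(0,2(1+\rho))$, this equals $(2k-1)!!\,(2(1+\rho))^{k}=\frac{(2k)!}{k!}(1+\rho)^{k}$, the left side of (\ref{G1}). Writing instead $X+Y=(1+\rho)X+\sqrt{1-\rho^{2}}\,V$ with $V$ independent of $X$ and expanding by the binomial theorem, only the even terms $l=2k-2j$ survive; collecting $E[X^{2k-2j}]=(2k-2j-1)!!$ and $E[V^{2j}]=(2j-1)!!$ and using $\binom{2k}{2k-2j}=\binom{2k}{2j}$ reproduces the right side. Part (ii) is then purely combinatorial: after the substitution $r=j-2m$ the double sum becomes $\frac{1}{2^{k}}\sum_{r}(2\rho)^{r}\binom{k}{r}\sum_{m}\binom{k-r}{m}$; the inner sum is $2^{k-r}$ and the binomial theorem collapses the remainder to $(1+\rho)^{k}$, so I would simply present it in this reindexed form.

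For (iii) I recognize the left side as an iterated forward difference: with $f(j)=\frac{(\beta)^{(j+m)}}{(\beta)^{(j)}}=(\beta+j)^{(m)}$ and $\Delta g(x)=g(x+1)-g(x)$ one has $\sum_{j=0}^{n}(-1)^{j}\binom{n}{j}f(j)=(-1)^{n}\Delta^{n}f(0)$. The key step, valid for every complex $m$ from $(\beta+x)^{(m)}=\Gamma(\beta+x+m)/\Gamma(\beta+x)$, is $\Delta(\beta+x)^{(m)}=m\,(\beta+x+1)^{(m-1)}$; iterating $n$ times gives $\Delta^{n}(\beta+x)^{(m)}=m(m-1)\cdots(m-n+1)\,(\beta+x+n)^{(m-n)}=n!\binom{m}{n}(\beta+x+n)^{(m-n)}$. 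Evaluating at $x=0$ and using $(\beta+n)^{(m-n)}=(\beta)^{(m)}/(\beta)^{(n)}$ gives exactly the right side of (\ref{Ex1}), the factor $\binom{m}{n}$ correctly vanishing when $m$ is a nonnegative integer below $n$.

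Finally, (iv) and (v) have the same right side, so I compute $E[(X-Y)^{n}]$ once. The joint Laplace transform of the Kibble pair is $E[e^{-sX-tY}]=[(1+s)(1+t)-\rho s t]^{-\beta}$, whence with $s=u,\ t=-u$ one gets $E[e^{-u(X-Y)}]=(1-(1-\rho)u^{2})^{-\beta}$; this is even in $u$, so the odd moments vanish, and the coefficient of $u^{2l}$ yields $E[(X-Y)^{2l}]=\frac{(2l)!}{l!}(\beta)^{(l)}(1-\rho)^{l}$, the stated value. It remains to identify each left side with $\sum_{m}(-1)^{n-m}\binom{n}{m}E[X^{m}Y^{n-m}]=E[(X-Y)^{n}]$ (the alternating weight $(-1)^{m}$ in (\ref{Ex2}) differs only by the harmless factor $(-1)^{n}$, which by exchangeability leaves the value unchanged). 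For (iv) the inner sum is precisely $(\beta)^{(m)}(\beta)^{(n-m)}\,{}_{2}F_{1}(-m,-(n-m);\beta;\rho)$, the closed form of the cross-moment $E[X^{m}Y^{n-m}]$. For (v), using $(\beta+m-j)^{(j)}=(\beta)^{(m)}/(\beta)^{(m-j)}$ and the substitution $i=m-j$, the inner sum simplifies to $(\beta)^{(m)}\sum_{i}\binom{m}{i}\rho^{i}(1-\rho)^{m-i}(\beta+i)^{(n-m)}=(\beta)^{(m)}E[(\beta+B)^{(n-m)}]$ with $B\sim\mathrm{Bin}(m,\rho)$, which is the same cross-moment read from the conditional (binomial-thinning) description of the Kibble pair. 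The main obstacle is assembling the few Kibble bivariate gamma facts needed — the closed-form cross-moment for (iv) and the conditional representation producing the binomial sum for (v) — and checking that both agree with the Laplace-transform value $E[(X-Y)^{n}]$; the normal identities (i)–(ii) and the difference identity (iii) are routine once the models are set up.
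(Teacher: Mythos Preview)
Your proposal is correct. Parts (i), (iv) and (v) follow essentially the paper's strategy: compute $E[(X\pm Y)^{n}]$ for a bivariate normal, respectively a Kibble bivariate gamma, pair in two ways. The paper obtains the closed-form value and the cross-moments via the Lancaster (Mehler, Hardy--Hille) expansions in Hermite and Laguerre polynomials, whereas you use the equivalent Laplace-transform and conditional-moment descriptions; your inner sum in (v), $(\beta)^{(m)}\sum_{i}\binom{m}{i}\rho^{i}(1-\rho)^{m-i}(\beta+i)^{(n-m)}$, is exactly what the paper gets by first computing $E[X^{m}\mid Y]$ from the Laguerre expansion and then integrating against $Y^{n-m}$, so the two routes are the same computation in different clothing.

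Parts (ii) and (iii) are where you genuinely diverge from the paper. For (ii) the paper derives the double sum from $E[X^{m}Y^{2k-m}]$ expressed through the Hermite coefficients $H_{j,n}$, i.e.\ it is another instance of the ``moment in two ways'' method. Your reindexing $r=j-2m$ collapses the inner sum to $2^{k-r}$ and reduces the identity to the binomial theorem; this is shorter and purely combinatorial, but it does not exhibit (ii) as a companion to (i) in the way the paper does. For (iii) the paper realizes the left side as the direct evaluation of $H_{j,n}=E[X^{j}l_{n}(X)]$ from the explicit Laguerre expansion, and the right side as the same quantity obtained via orthogonality and the expansion of $x^{j}$ in Laguerre polynomials; your finite-difference argument, using $\Delta(\beta+x)^{(m)}=m(\beta+x+1)^{(m-1)}$ and iterating, is more elementary and self-contained, at the cost of severing the link to the Laguerre computation that the paper then reuses for (iv).
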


Below we present some, believed to be important, particular cases, remarks
and corollaries.

\begin{remark}
Taking first $\rho \allowbreak =\allowbreak \frac{2}{3}$ then $\rho
\allowbreak =\allowbreak \frac{1}{3}$ and finally $\rho \allowbreak
=\allowbreak \frac{4}{3}$ in (\ref{G1}) we get: 
\begin{eqnarray*}
\sum_{j=0}^{k}\binom{2k}{2j}5^{j}(2j-1)!!(2k-2j-1)!! &=&\frac{(2k)!3^{k}}{k!}%
, \\
\sum_{j=0}^{k}\binom{2k}{2j}2^{j}(2j-1)!!(2k-2j-1)!! &=&\frac{(2k)!3^{k}}{%
k!2^{k}}, \\
\sum_{j=0}^{k}\binom{2k}{2j}(-7)^{j}(2j-1)!!(2k-2j-1)!! &=&(-1)^{k}\frac{%
(2k)!3^{k}}{k!}.
\end{eqnarray*}
\end{remark}

\begin{remark}
Let us take $\rho \allowbreak =\allowbreak \sqrt{5}$ in (\ref{G1}) and
recall that $(1+\sqrt{5})^{n}\allowbreak =\allowbreak 2^{n}(\frac{1+\sqrt{5}%
}{2})^{n}\allowbreak =\allowbreak 2^{n-1}(L_{n}+\sqrt{5}F_{n}),$ where $%
L_{n} $ and $F_{n}$ are respectively $n-$th Lucas and Fibonacci numbers.
Now, equating integer and irrational parts we end up with the following two
identities valid for $k\geq 0:$%
\begin{eqnarray*}
\frac{(2k)!}{k!}L_{k} &=&2^{k}\sum_{j=0}^{k}\binom{2k}{2j}%
(-1)^{j}L_{2k-2j}(2j-1)!!(2k-2j-1)!!, \\
\frac{(2k)!}{k!}F_{k} &=&2^{k}\sum_{j=0}^{k}\binom{2k}{2j}%
(-1)^{j}F_{2k-2j}(2j-1)!!(2k-2j-1)!!
\end{eqnarray*}
\end{remark}

\begin{remark}
Taking $\rho \allowbreak =\allowbreak i$ in (\ref{G1}), further noticing
that $(1+i)^{k}\allowbreak =\allowbreak 2^{k/2}\exp (ik\pi /4)$ and taking
firstly $k\allowbreak =\allowbreak 4n$ and cancelling out $(-4)^{n}$ on both
sides, we get for all $n\geq 0:$%
\begin{equation*}
\frac{(8n)!}{(4n)!}=\sum_{j=0}^{4n}\binom{8n}{2j}(2j-1)!!(8n-2j-1)!!,
\end{equation*}%
and the taking $k\allowbreak =\allowbreak 4n+1$ and equating the real and
imaginary parts we get for all $n\geq 0:$%
\begin{eqnarray*}
\frac{(8n+2)!}{(4n+1)!}(-4)^{n} &=&2^{4n+1}\sum_{m=0}^{2n}(-1)^{m}\binom{8n+2%
}{4m+2}(4m+1)!!(8n-4m-1)!!, \\
\frac{(8n+2)!}{(4n+1)!}(-4)^{n} &=&2^{4n+1}\sum_{m=0}^{2n}(-1)^{m}\binom{8n+2%
}{4m}(4m-1)!!(8n-4m+1)!!.
\end{eqnarray*}
\end{remark}

\begin{remark}
Taking $\rho \allowbreak =\allowbreak 1/2$ in (\ref{G2}) we get the
following identity:%
\begin{equation*}
1=\sum_{j=0}^{2k}(-1)^{j}\sum_{m=0}^{\left\lfloor j/2\right\rfloor }\binom{k%
}{j-2m}\binom{k-j+2m}{m}.
\end{equation*}
\end{remark}

\begin{remark}
Taking $\rho \allowbreak =\allowbreak i$ and $k\allowbreak =\allowbreak 4n$
in (\ref{G2}), further noticing that $(1+i)^{4n}\allowbreak =\allowbreak
(-4)^{n}$ and $(2i)^{j-2m}\allowbreak =\allowbreak i^{j}(-1)^{m}$ and
finally splitting the right-hand side into real and imaginary parts we get
for all $n\geq 0:$%
\begin{eqnarray*}
4^{n} &=&\frac{(-1)^{n}}{16^{n}}\sum_{j=0}^{4n}(-1)^{j}%
\sum_{m=0}^{j}(-1)^{m}4^{j-m}\binom{4n}{2j-2m}\binom{4n-2j+2m}{m}, \\
0 &=&\sum_{j=0}^{4n}(-1)^{j}\sum_{m=0}^{j}(-1)^{m}4^{j-m}\binom{4n}{2j-2m+1}%
\binom{4n-2j-1+2m}{m}.
\end{eqnarray*}
\end{remark}

\begin{remark}
i) Taking $\beta \allowbreak =\allowbreak 1$ and integer $k$ in (\ref{Ex1})
we get%
\begin{equation*}
\sum_{j=0}^{n}(-1)^{j}\binom{n}{j}\binom{j+k}{j}=(-1)^{n}\binom{k}{n}.
\end{equation*}

ii) Taking $\beta \allowbreak =\allowbreak 1/2$ and integer $k$ in (\ref{Ex1}%
) we get:%
\begin{equation*}
\sum_{j=0}^{n}(-1)^{j}\frac{(2n-1)!!(2j+2k-1)!!}{j!(n-j)!(2j-1)!!}%
=(-1)^{n}2^{n}\binom{k}{n}(2k-1)!!.
\end{equation*}
\end{remark}

\begin{remark}
i) Taking $\beta \allowbreak =\allowbreak 1/2$ and and integer $k$ in (\ref%
{Ex2}) we get:%
\begin{equation*}
\sum_{j=0}^{n}(-1)^{n-j}\binom{n}{j}\frac{(2n+2k+2j-1)!!}{(2j-1)!!}=2^{n}%
\frac{(n+k)!(2n+2k-1)!!}{k!(2n-1)!!}.
\end{equation*}

ii) Taking $\beta \allowbreak =\allowbreak 1$ in (\ref{Ex2}) we get for all
integer $n$ and $k.$ 
\begin{equation}
\sum_{m=0}^{n}\left( -1\right) ^{m}\sum_{k=0}^{m}\binom{m}{k}\binom{n-m}{k}%
\rho ^{k}=\left\{ 
\begin{array}{ccc}
0 & \text{if} & n\text{ is odd} \\ 
(1-\rho )^{n/2} & \text{if} & n\text{ is even}%
\end{array}%
\right. .  \label{explr}
\end{equation}

iii) Changing the order of summation in (\ref{explr}) and comparing the
coefficients in expansions in powers of $\rho $ we get:%
\begin{equation*}
\sum_{m=k}^{n}(-1)^{m-k}\binom{m}{k}\binom{n-m}{k}=\left\{ 
\begin{array}{ccc}
0 & \text{if} & n\text{ is odd} \\ 
\binom{n/2}{k} & \text{if} & n\text{ is even}%
\end{array}%
\right. .
\end{equation*}%
iv) Taking additionally $\rho \allowbreak =\allowbreak 1$ in we get, for
all, $n\geq 0:$ 
\begin{equation*}
\sum_{m=0}^{n}\left( -1\right) ^{m}\sum_{k=0}^{m}\binom{m}{k}\binom{n-m}{k}%
=0.
\end{equation*}

v) Taking $\beta \allowbreak =\allowbreak 1/2$ in (\ref{Ex2}) we get after
denoting $2\rho \allowbreak =\allowbreak x$ and canceling out $n!:$%
\begin{gather*}
\sum_{m=0}^{n}\left( -1\right) ^{m}(2m-1)!!(2n-2m-1)!!\sum_{k=0}^{m}\frac{1}{%
k!(m-k)!(n-m-k)!}\frac{x^{k}}{(2k-1)!!} \\
=\left\{ 
\begin{array}{ccc}
0 & \text{if} & n\text{ is odd} \\ 
\frac{(n-1)!!}{(n/2)!}(2-x)^{n/2} & \text{if} & n\text{ is even}%
\end{array}%
\right. .
\end{gather*}%
vi) By taking additionally in the above-mentioned identity $x\allowbreak
=\allowbreak 0$ we get:%
\begin{equation*}
\sum_{m=0}^{n}\left( -1\right) ^{m}\binom{n}{m}(2m-1)!!(2n-2m-1)!!=\left\{ 
\begin{array}{ccc}
0 & \text{if} & n\text{ is odd} \\ 
\frac{n!(n-1)!!}{(n/2)!}2^{n/2} & \text{if} & n\text{ is even}%
\end{array}%
\right. .
\end{equation*}

vii) Taking $\rho \allowbreak =\allowbreak 1$ in (\ref{Ex2}) we get for all
complex $\beta \neq 0$ and $n\geq 0$:%
\begin{equation*}
\sum_{m=0}^{n}\left( -1\right) ^{m}\binom{n}{m}\left( \beta \right) ^{\left(
m\right) }\left( \beta \right) ^{\left( n-m\right) }\sum_{k=0}^{m}\binom{m}{k%
}\binom{n-m}{k}\frac{k!}{\left( \beta \right) ^{\left( k\right) }}=0.
\end{equation*}
\end{remark}

\begin{remark}
i) Taking $\rho \allowbreak =\allowbreak 0$ in (\ref{Ex3}) we get 
\begin{equation*}
\sum_{m=0}^{n}(-1)^{n-m}\binom{n}{m}\left( \beta \right) ^{\left( n-m\right)
}\left( \beta \right) ^{\left( m\right) }=\left\{ 
\begin{array}{ccc}
0 & \text{if} & n\text{ is odd} \\ 
\frac{n!}{(n/2)!}\left( \beta \right) ^{\left( n/2\right) } & \text{if} & n%
\text{ is even}%
\end{array}%
\right. ,
\end{equation*}%
which is related, but not a direct generalization of a well-known (also from
umbral calculus) equality:%
\begin{equation*}
\sum_{m=0}^{n}\binom{n}{m}\left( \beta \right) ^{\left( n-m\right) }\left(
\alpha \right) ^{\left( m\right) }=\left( \alpha +\beta \right) ^{\left(
n\right) }.
\end{equation*}

ii) Taking $\rho \allowbreak =\allowbreak 1/2$ in (\ref{Ex3}), we get for
all complex $\beta \neq 0:$%
\begin{gather*}
\sum_{m=0}^{n}(-1)^{n-m}\binom{n}{m}2^{-m}\sum_{j=0}^{m}\binom{m}{j}\left(
\beta \right) ^{\left( n-j\right) }\left( \beta +m-j\right) ^{\left(
j\right) } \\
=\left\{ 
\begin{array}{ccc}
0 & \text{if} & n\text{ is odd} \\ 
\frac{n!}{2^{n/2}(n/2)!}\left( \beta \right) ^{\left( n/2\right) } & \text{if%
} & n\text{ is even}%
\end{array}%
\right. .
\end{gather*}

iii) Changing the order of summation in (\ref{Ex3}), comparing the
coefficients in expansions in powers of $\rho $, multiplying both sides by $%
\left( \beta \right) ^{\left( k\right) }(n-k)!$ and cancelling out $n!$ we
get for all $\beta $ complex and $0\leq k\leq n$:%
\begin{eqnarray*}
&&\sum_{m=k}^{n}(-1)^{m-k}\frac{(n-k)!}{(m-k)!(n-m-k)!}\left( \beta \right)
^{\left( m\right) }\left( \beta \right) ^{\left( n-m\right) } \\
&=&\left\{ 
\begin{array}{ccc}
0 & \text{if} & n\text{ is odd } \\ 
(n/2)!\binom{n-k}{n/2}\left( \beta \right) ^{\left( k\right) }\left( \beta
\right) ^{\left( n/2\right) } & \text{if} & n\text{ is even}%
\end{array}%
\right. .
\end{eqnarray*}
\end{remark}

We have also the following relationships between Fibonacci $\left\{
F_{n}\right\} _{n\geq 0}$ and Lucas $\left\{ L_{n}\right\} _{n\geq 0}$
numbers. For Fibonacci and Lucas numbers see, e.g.: \cite{Ball03}, \cite%
{Knuth98} or \cite{Cull05}. For other relationships of Fibonacci and Lucas
numbers with the so-called Catalan triangles (expressed by binomial
coefficients) see, e.g., \cite{Szabl21}.

\begin{corollary}
Let us denote by $\phi $ the following number $\left( \sqrt{5}-1\right) /2$
that is called "golden ratio". Take $\rho \allowbreak =\allowbreak -\phi $
in (\ref{G1}). We get then 
\begin{eqnarray*}
2\frac{\left( 2k\right) !}{k!}L_{k} &=&\sum_{j=0}^{k}(-1)^{j}\binom{2k}{2j}%
(2j-1)!!(2k-2j-1)!!(L_{2k-2j}L_{j}-5F_{2k-2j}F_{j}), \\
2\frac{\left( 2k\right) !}{k!}F_{k} &=&\sum_{j=0}^{k}(-1)^{j}\binom{2k}{2j}%
(2j-1)!!(2k-2j-1)!!(F_{2k-2j}L_{j}-L_{2k-2j}F_{j}).
\end{eqnarray*}
\end{corollary}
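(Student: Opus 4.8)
The plan is to reduce the corollary entirely to part (i) of Theorem \ref{Main}, using only the arithmetic of the golden ratio and the closed forms for powers of $(1\pm\sqrt5)/2$. First I would record the substitution: with $\rho=-\phi=(1-\sqrt5)/2$ we have $1+\rho=(3-\sqrt5)/2$ and $1-\rho^2=(1+\rho)(1-\rho)$; since $1-\rho=(1+\sqrt5)/2$, the product $1-\rho^2=(3-\sqrt5)(1+\sqrt5)/4=(-2+2\sqrt5)/4=(\sqrt5-1)/2=\phi$. Hence $(1-\rho^2)^j=\phi^j$, and I claim $1+\rho=(3-\sqrt5)/2=((1-\sqrt5)/2)^2$, so $(1+\rho)^{m}=\bigl((1-\sqrt5)/2\bigr)^{2m}$ for any exponent $m$. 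Thus both $(1+\rho)^k$ and $(1+\rho)^{2k-2j}$ become even powers of $(1-\sqrt5)/2$, and $\phi^j=\bigl((\sqrt5-1)/2\bigr)^j$ is a power of $-(1-\sqrt5)/2$, i.e. $\phi^j=(-1)^j\bigl((1-\sqrt5)/2\bigr)^j$.

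Next I would invoke the standard Binet-type closed form: $\bigl((1\pm\sqrt5)/2\bigr)^n=(L_n\pm\sqrt5\,F_n)/2$ for all $n\ge0$, where $L_n,F_n$ are the Lucas and Fibonacci numbers (this is exactly the identity already used, in the scaled form $(1+\sqrt5)^n=2^{n-1}(L_n+\sqrt5F_n)$, in the earlier remarks). Applying it, $(1+\rho)^k=\bigl((1-\sqrt5)/2\bigr)^{2k}=(L_{2k}-\sqrt5F_{2k})/2$; similarly $(1+\rho)^{2k-2j}=(L_{4k-4j}-\sqrt5F_{4k-4j})/2$. For the factor $\phi^j$ I would instead write $\phi^j=\bigl((\sqrt5-1)/2\bigr)^j$; since $(\sqrt5-1)/2$ is the positive root $1/\Phi$ one has $\bigl((\sqrt5-1)/2\bigr)^j=(-1)^j\bigl((1-\sqrt5)/2\bigr)^j=(-1)^j(L_j-\sqrt5F_j)/2$. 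Substituting all three expansions into \eqref{G1} and multiplying through by $2$ gives
\begin{equation*}
2\frac{(2k)!}{k!}\cdot\frac{L_{2k}-\sqrt5F_{2k}}{2}
=\sum_{j=0}^{k}\binom{2k}{2j}(2j-1)!!(2k-2j-1)!!\,\frac{L_{4k-4j}-\sqrt5F_{4k-4j}}{2}\,(-1)^j(L_j-\sqrt5F_j).
\end{equation*}
Wait—I must be careful: the exponent on the left of \eqref{G1} that I want to match to $L_k,F_k$ is $k$, not $2k$, so in fact $(1+\rho)^k=\bigl((1-\sqrt5)/2\bigr)^{2k}$ and I should use $\bigl((1-\sqrt5)/2\bigr)^{2k}=(L_{2k}-\sqrt5F_{2k})/2$ on the left but I need the right-hand \emph{target} to be $L_k,F_k$; this forces me instead to take the square root relation the other way, writing $(1+\rho)^k$ directly. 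The cleaner route, which I would actually follow, is: $(1+\rho)=\phi^2$ where $\phi=(\sqrt5-1)/2$ satisfies $\phi^2=1-\phi=(3-\sqrt5)/2$, so $(1+\rho)^k=\phi^{2k}$; and $1-\rho^2=\phi$ as computed, so $(1-\rho^2)^j=\phi^j$; and $(1+\rho)^{2k-2j}=\phi^{4k-4j}$. Then every term is a power of $\phi$, and I use $\phi^n=(-1)^n\bigl((1-\sqrt5)/2\bigr)^{?}$—no; cleanest of all: $\phi^n=((-1)^n/\Phi^n)$ and $\Phi^n=(L_n+\sqrt5F_n)/2$, so $\phi^n=(-1)^n\cdot\frac{2}{L_n+\sqrt5F_n}=(-1)^n\frac{L_n-\sqrt5F_n}{2}$ using $\Phi^n\phi^n=(-1)^n$ and rationalizing. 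Hence $\phi^{2k}=(L_{2k}-\sqrt5F_{2k})/2$, $\phi^j=(-1)^j(L_j-\sqrt5F_j)/2$, $\phi^{4k-4j}=(L_{4k-4j}-\sqrt5F_{4k-4j})/2$.

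The final step is bookkeeping: substitute these into \eqref{G1}, expand the product $(L_{4k-4j}-\sqrt5F_{4k-4j})(L_j-\sqrt5F_j)=L_{4k-4j}L_j+5F_{4k-4j}F_j-\sqrt5(F_{4k-4j}L_j+L_{4k-4j}F_j)$, multiply both sides by $2$, and separate the rational part from the $\sqrt5$-coefficient. Matching rational parts yields the $L_k$ identity and matching $\sqrt5$-parts yields the $F_k$ identity—provided I have correctly normalized the left side, where $\phi^{2k}=(L_{2k}-\sqrt5F_{2k})/2$ must be rewritten; here I would use the duplication formulas $L_{2k}=L_k^2-2(-1)^k$ and $F_{2k}=F_kL_k$ only if needed, but in fact the stated corollary has $L_k,F_k$ on the left, so the honest check is that $\phi^{2k}$ and the quoted answer agree, i.e. that the paper's indexing on the right-hand side ($L_{2k-2j}$, $F_{2k-2j}$, $L_j$, $F_j$ with a single power, not $L_{4k-4j}$) matches once one recalls $1+\rho=\phi^2$ gives $(1+\rho)^{2k-2j}=\phi^{4k-4j}$ but the paper writes the answer using $\phi^{2(2k-2j)}$ re-expressed—so I must reconcile $\phi^{2m}$ versus the claimed $L_{2k-2j},F_{2k-2j}$, which works because the corollary's left side also carries $L_k$ not $L_{2k}$, i.e. a consistent halving of all Fibonacci/Lucas indices, equivalently the identity is stated after replacing $\phi\mapsto\phi$ but reading $\phi^{2n}\leftrightarrow(L_n\text{-expression})$. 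The main obstacle is precisely this index/normalization reconciliation — getting the powers of $2$ and the correspondence $\phi^{2n}\leftrightarrow \tfrac12(L_{?}\mp\sqrt5F_{?})$ exactly right so that comparing rational and irrational parts reproduces the stated formulas verbatim; once that dictionary is pinned down, the rest is the routine algebra of expanding one product and collecting terms.
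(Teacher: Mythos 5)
Your preliminary computations are correct ($1-\rho^2=\phi$ and $1+\rho=1-\phi=\phi^2$ when $\rho=-\phi$, together with the dictionary $\phi^n=(-1)^n(L_n-\sqrt{5}F_n)/2$ and $((1+\sqrt{5})/2)^n=(L_n+\sqrt{5}F_n)/2$), but the argument as carried out cannot yield the stated corollary, and you stop exactly at the point where the essential step is needed. Substituting $\rho=-\phi$ literally into (\ref{G1}) turns every factor into a power of $\phi$ with exponents $2k$, $4k-4j$ and $j$; after separating rational and $\sqrt{5}$ parts you would obtain identities whose left-hand sides contain $L_{2k},F_{2k}$ and whose right-hand sides contain $L_{4k-4j},F_{4k-4j}$ (and with a plus sign in front of $5F_{4k-4j}F_j$) --- a genuinely different pair of identities from the one claimed. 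The ``consistent halving of indices'' you appeal to at the end does not exist: $\phi^{2n}=(L_{2n}-\sqrt{5}F_{2n})/2$, which cannot be re-read as an expression in $L_n,F_n$ without duplication formulas that you never apply. Deferring this reconciliation as ``routine algebra'' leaves the proof incomplete, and pursued literally it proves a different statement.

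The repair is what the paper actually does: arrange for the base of all the powers to be $1+\phi=(1+\sqrt{5})/2$ rather than $1-\phi=\phi^2$. Concretely, either substitute $\rho=\phi$ into (\ref{G1}) as printed, or equivalently use the form of the identity with $1-\rho$ in place of $1+\rho$ (i.e.\ (\ref{G1}) with $\rho$ replaced by $-\rho$; note $1-\rho^2$ is unchanged) and then set $\rho=-\phi$, so that $1-\rho=1+\phi$. Then $(1-\rho)^{k}=(L_k+\sqrt{5}F_k)/2$, $(1-\rho)^{2k-2j}=(L_{2k-2j}+\sqrt{5}F_{2k-2j})/2$, and $(1-\rho^2)^j=\phi^j=(-1)^j(L_j-\sqrt{5}F_j)/2$; multiplying out and separating the terms with and without $\sqrt{5}$ gives precisely the two stated identities, including the overall factor $2$, the sign $(-1)^j$, and the minus sign in $L_{2k-2j}L_j-5F_{2k-2j}F_j$.
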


\begin{proof}
We know that for all $n\geq 0:$ 
\begin{equation*}
(1+\phi )^{n}=((1+\sqrt{5})/2)^{n}=L_{n}/2+F_{n}\sqrt{5}/2.
\end{equation*}
Now notice also that we have $\phi \allowbreak =\allowbreak 1/(1+\phi )$ and
that 
\begin{equation*}
L_{n}^{2}-5F_{n}^{2}=(-1)^{n}4.
\end{equation*}%
Hence we have: 
\begin{equation*}
\phi ^{n}=(-1)^{n}(L_{n}/2-F_{n}\sqrt{5}/2).
\end{equation*}%
Now we take $\rho \allowbreak =\allowbreak -\phi $ and insert it in (\ref{G1}%
). Notice that $1-\rho \allowbreak =\allowbreak (1+\phi )$, hence $(1-\rho
)^{2k-2j}\allowbreak =\allowbreak L_{2k-2j}/2\allowbreak +\allowbreak
F_{2k-2j}\sqrt{5}/2$ and $1-\rho ^{2}\allowbreak =\allowbreak \phi $,
consequently $(1-\rho ^{2})^{j}\allowbreak =\allowbreak
(-1)^{j}(L_{j}/2\allowbreak -\allowbreak F_{j}\sqrt{5}/2)$. Now it remains
to perform multiplication and separation of terms with and without $\sqrt{5}$%
.
\end{proof}

\section{Probabilistic background and the proofs}

\subsection{Probabilistic background}

All identities presented in Theorem \ref{Main} stem from calculating the
following moments $E(X-Y)^{n},$ $n\geq 0,$ where $X$ and $Y$ are two
normalized random variables. The joint distribution of these random
variables has one parameter and is either bivariate Normal (Gaussian) or
bivariate Gamma distribution. More precisely, in the first case, the joint
distribution of $(X,Y)$ has the following well-known density, valid for all $%
x,y\in \mathbb{R}$ and $\left\vert \rho \right\vert <1:$ 
\begin{equation}
f_{N}(x,y|\rho )=\frac{1}{2\pi (1-\rho ^{2})}\exp \left( -\frac{x^{2}-2\rho
xy+y^{2}}{2(1-\rho ^{2})}\right) ,  \label{2xN}
\end{equation}
while in the second case valid for all $x,y,\beta >0$ and $\left\vert \rho
\right\vert <1:$ 
\begin{equation*}
f_{G}(x,y|\rho )=f_{g}(x|\beta )f_{g}(y|\beta )\frac{\exp \left( -\frac{\rho
\left( x+y\right) }{1-\rho }\right) }{\left( 1-\rho \right) \left( xy\rho
\right) ^{(\beta -1)/2}}I_{\beta -1}\left( \frac{2\sqrt{xy\rho }}{1-\rho }%
\right) ,
\end{equation*}%
where 
\begin{equation*}
f_{g}(x|\beta )\allowbreak =\allowbreak x^{\beta -1}\exp (-x)/\Gamma (\beta
),
\end{equation*}
for $x>0$ and $0$ otherwise. $I_{\alpha }$ denotes the modified Bessel
function of the first kind. The most important property of these joint
densities is that they allow the so-called Lancaster expansions. To learn
more about Lancaster expansions, their probabilistic interpretations and
convergence problems associated with them, see, e.g., \cite{Alexits61}, \cite%
{Lancaster58}, \cite{Lancaster75}, \cite{Szeg75}, \cite{Szabl21}, \cite%
{SzabStac}. The orthogonal polynomials that we are using are well presented,
for example, in two well-known monographs \cite{Chih79}, and \cite{Szeg75}.

Namely, in the first case, we have the so-called Poisson-Mehler expansion 
\begin{equation}
f_{N}(x,y|\rho )=f_{N}(x)f_{N}(x)\sum_{n\geq 0}\rho ^{n}h_{n}(x)h_{n}(y),
\label{MN}
\end{equation}%
where we denoted for simplicity 
\begin{equation*}
f_{N}(x)\allowbreak =\allowbreak \exp (-x^{2}/2)/\sqrt{2\pi },
\end{equation*}%
and 
\begin{equation*}
h_{n}(x)\allowbreak =\allowbreak H_{n}(x)/\sqrt{n!}.
\end{equation*}%
$h_{n}(x)$ is the orthonormal modification of the so-called probabilistic
Hermite polynomials $\left\{ H_{n}(x)\right\} $, i.e. polynomials defined by
the following three-term recurrence :%
\begin{equation*}
H_{n+1}(x)\allowbreak =\allowbreak xH_{n}(x)-nH_{n-1}(x),
\end{equation*}%
with $H_{-1}(x)\allowbreak =\allowbreak 0$ and $H_{0}(x)\allowbreak
=\allowbreak 1.$ It is also known, that 
\begin{equation}
\frac{1}{\sqrt{2\pi }}\int_{-\infty }^{\infty }h_{n}(x)h_{m}(x)\exp
(-x^{2}/2)dx=\delta _{nm},  \label{ortH}
\end{equation}%
where $\delta _{nm}$ denotes Kronecker's delta.

In the second case we have 
\begin{equation}
f_{G}(x,y|\rho )=f_{g}(x|\beta )f_{g}(y|\beta )\sum_{j\geq 0}\rho
^{n}l_{n}(x|\beta )l_{n}(y|\beta ),  \label{MG}
\end{equation}%
where $l_{n}(x|\beta )\allowbreak =\allowbreak \sqrt{\frac{n!}{\left( \beta
\right) ^{\left( n\right) }}}L_{n}(x|\beta )$ is the orthonormal
modification of the so-called generalized Laguerre polynomials $\left\{
L_{n}(x|\beta )\right\} $, defined by the following expansions: 
\begin{equation}
L_{n}(x|\beta )=\sum_{k=0}^{n}(-1)^{k}\frac{(\beta )^{(n)}}{(n-k)!(\beta
)^{(k)}}x^{k}/k!.  \label{defL}
\end{equation}

Let us remark that the expansion (\ref{MG}) is known under the name
Hardy-Hille (see, e.g. \cite{Szeg75}, p.102) formula. One knows that : 
\begin{equation}
\int_{0}^{\infty }l_{n}(x|\beta )l_{m}(x|\beta )f_{g}(x|\beta )dx=\delta
_{n,m}.  \label{ortL}
\end{equation}

Our aim is to calculate $\left\{ E(X-Y)^{n}\right\} _{n\geq 0}.$ We will do
it in two ways.

The first way is to calculate the generating function of the set of these
numbers i.e. the function%
\begin{equation*}
g(t)=\sum_{n\geq 0}t^{n}E(X-Y)^{n}/n!.
\end{equation*}%
It can be calculated in the following way since it is known that all moments
exist, hence one can exchange integration and summation. Namely, we have%
\begin{equation*}
g(t)=\int \int \exp \left( tx-ty\right) f(x,y|\rho )dxdy.
\end{equation*}%
Obviously here and below the integration is depending on the case either
over whole $\mathbb{R}^{2}$ or over $\mathbb{R}^{+}\times \mathbb{R}^{+}$.
The second way is to calculate the number using an expansion: 
\begin{equation*}
E(-X+Y)^{n}=\sum_{m=0}^{n}(-1)^{m}EX^{m}Y^{n-m}.
\end{equation*}%
Now 
\begin{equation*}
EX^{m}Y^{n-m}=\int \int x^{m}y^{n-m}f(x,y|\rho )dxdy.
\end{equation*}%
where $f$ is either $f_{N}$ or $f_{G}$ presented above. Using one of the
expansions (\ref{MN}) or (\ref{MG}) we will find these moments using the
numbers 
\begin{equation*}
H_{m,j}=EX^{m}k_{j}(X),
\end{equation*}%
where $k_{i}(x)$ is either $h_{j}(x)$ if we consider the Normal case or $%
l_{j}(x)$ if we consider Gamma case. Since both expansions (\ref{MN}) and (%
\ref{MG}) have similar structure we have:%
\begin{equation}
EX^{m}Y^{n-m}=\sum_{j=0}^{\min (m,n-m)}\rho ^{j}H_{m,j}H_{n-m,j}.
\label{ExpH}
\end{equation}%
This is so since $X$ and $Y$ have the same distributions and since $%
H_{m,j}\allowbreak =\allowbreak 0$ if $j>m$ because of the orthogonality of
polynomials $h$ or $l.$ We will calculate this function assuming either
expansion (\ref{MN}) or (\ref{MG}).

\begin{case}
\label{Gaussian}So let us start with expansion (\ref{MN}). First let us
calculate the auxiliary numbers $H_{j,n}$. We have: 
\begin{equation}
H_{j,n}=EX^{j}h_{n}(X)=\frac{1}{\sqrt{n!}}EX^{j}H_{n}(X)=\left\{ 
\begin{array}{ccc}
0 & \text{if } & n>j\text{ or }j-n\text{ odd} \\ 
\frac{j!}{2^{(j-n)/2}((j-n)/2)!\sqrt{n!}} & \text{if} & j-n\text{ is even}%
\end{array}%
\text{.}\right.  \label{HjnN}
\end{equation}%
This is so since polynomials $h_{n}$ are orthogonal and also since it is
common knowledge, that $\forall n\geq 0:$%
\begin{equation*}
x^{j}\allowbreak =\allowbreak j!\sum_{m=0}^{\left\lfloor j/2\right\rfloor }%
\frac{1}{2^{m}m!(j-2m)!}H_{j-2m}(x).
\end{equation*}%
Secondly, let us calculate the following auxiliary function that will
simplify many further calculations.

\begin{eqnarray*}
m_{n}(t)\allowbreak &=&\allowbreak Eh_{n}(X)\exp (tX)\allowbreak
=\allowbreak \frac{1}{\sqrt{2\pi n!}}\int_{-\infty }^{\infty }H_{n}(x)\exp
(tx)\exp (-x^{2}/2)dx \\
&=&\frac{\exp (t^{2}/2)}{\sqrt{2\pi n!}}\int_{-\infty }^{\infty
}H_{n}(x)\exp (-(x-t)^{2}/2)dx.
\end{eqnarray*}%
Now, let us change the variable under the integral by setting $y\allowbreak
=\allowbreak (x-t).$ Then, we get:%
\begin{equation*}
m_{n}(t)\allowbreak =\allowbreak \frac{\exp (t^{2}/2)}{\sqrt{2\pi n!}}%
\int_{-\infty }^{\infty }H_{n}(y+t)\exp (-y^{2}/2)dy.
\end{equation*}%
Next, we utilize the following the well-known expansion%
\begin{equation*}
H_{n}(x+y)\allowbreak =\allowbreak \sum_{j=0}^{n}\binom{n}{j}H_{j}(x)y^{n-j}.
\end{equation*}%
Now, since we have (\ref{ortH}), we see that $m_{n}(t)\allowbreak
=\allowbreak t^{n}\frac{\exp (t^{2}/2)}{\sqrt{n!}}$ and we get: 
\begin{gather*}
E\exp \left( tX-tY\right) \allowbreak =\allowbreak \sum_{n\geq 0}E\exp
\left( tX-tY\right) \\
=\frac{1}{2\pi }\sum_{n\geq 0}\rho ^{n}\int_{-\infty }^{\infty
}\int_{-\infty }^{\infty }\exp \left( tx-ty\right) \exp \left( -\left(
x^{2}+y^{2}\right) /2\right) h_{n}(x)h_{n}(y)dxdy \\
\sum_{n\geq 0}\rho ^{n}m_{n}(t)m_{n}(-t)=\exp (t^{2})\sum_{n\geq 0}(-\rho
)^{n}t^{2n}/n!=\exp (t^{2}(1-\rho )) \\
\sum_{n\geq 0}t^{2n}(1-\rho )^{n}/n!.
\end{gather*}%
Thus, we deduce that: 
\begin{equation}
E(X-Y)^{n}=\left\{ 
\begin{array}{ccc}
0 & \text{if } & n\text{ is odd} \\ 
\frac{n!}{(n/2)!}(1-\rho )^{n/2} & \text{if } & n\text{ is even}%
\end{array}%
\right. .  \label{nG}
\end{equation}
\end{case}

\begin{case}
\label{Gamma}Now let us consider expansion (\ref{MG}). We need to recall
some simple, well-known facts: The Gamma distribution with rate parameter
zero and shape parameter $\beta >0,$ is the distribution with the density $%
f_{g}$ presented above. It is easy to see, recalling the definition of the
Euler's Gamma function that: 
\begin{equation}
EX^{n}=\int_{0}^{\infty }x^{n}f_{g}(x|\beta )dx=\left( \beta \right)
^{\left( n\right) }.  \label{momG}
\end{equation}%
As before, let us calculate the set of auxiliary quantities and functions:
We start with the numbers: 
\begin{equation*}
H_{j,n}=EX^{j}l_{n}(X).
\end{equation*}%
We have:%
\begin{equation}
H_{j,n}=\frac{\sqrt{n!}}{\Gamma (\beta )\sqrt{\left( \beta \right) ^{\left(
n\right) }}}\int_{0}^{\infty }x^{j}L_{n}^{(\beta )}(x)x^{\beta -1}\exp
(-x)dx.  \label{defH}
\end{equation}%
Now, we use the well-known expansion of $x^{j}$ in powers of Laguerre
polynomials:%
\begin{equation}
x^{j}\allowbreak =\allowbreak j!\sum_{k=0}^{j}(-1)^{k}\frac{(\beta )^{(j)}}{%
(j-k)!(\beta )^{(k)}}L_{k}^{(\beta )}(x),  \label{xL}
\end{equation}%
and then we use the orthogonality of the Laguerre polynomials. Hence we have:%
\begin{equation}
H_{j,n}=(-1)^{n}\binom{j}{n}(\beta )^{(j)}\sqrt{\frac{n!}{\left( \beta
\right) ^{\left( n\right) }}}.  \label{HjnG}
\end{equation}

Let us calculate now, the following auxiliary functions:%
\begin{eqnarray*}
m_{n}(t) &=&El_{n}(X)\exp (tX)=\sqrt{\frac{n!}{\left( \beta \right) ^{\left(
n\right) }}}E\exp (tX)L_{n}(X|\beta )\allowbreak \\
&=&\allowbreak \sqrt{\frac{n!}{\left( \beta \right) ^{\left( n\right) }}}%
\frac{1}{\Gamma (\beta )}\int_{0}^{\infty }\exp (xt)L_{n}(x|\beta )x^{\beta
-1}\exp (-x)dx\allowbreak \\
&=&\allowbreak \sqrt{\frac{n!}{\left( \beta \right) ^{\left( n\right) }}}%
\frac{1}{\Gamma (\beta )}\int_{0}^{\infty }L_{n}(x|\beta )x^{\beta -1}\exp
(-x(1-t))dx\allowbreak .
\end{eqnarray*}%
Now, let us change variables under the integral by considering $y\allowbreak
=\allowbreak x(1-t).$ Then we get: $\allowbreak $%
\begin{equation*}
m_{n}(t)=\sqrt{\frac{n!}{\left( \beta \right) ^{\left( n\right) }}}\frac{1}{%
1-t}\frac{1}{\Gamma (\beta )}\int_{0}^{\infty }L_{n}(\frac{y}{1-t}|\beta
)\left( \frac{y}{1-t}\right) ^{\beta -1}\exp (-y)dy\allowbreak .
\end{equation*}%
We will now utilize the following well-known recurrence relation for
Laguerre polynomials (see, e.g., \cite{Abram64}) :%
\begin{equation*}
L_{n}(y|\beta )=\sum_{j=0}^{n}L_{n-j}(x|\beta +j)(y-x)^{j}/j!.
\end{equation*}%
So we get: 
\begin{gather*}
m_{n}(t)=\allowbreak \sqrt{\frac{n!}{\left( \beta \right) ^{\left( n\right) }%
}}\frac{1}{(1-t)^{\beta }}\frac{1}{\Gamma (\beta )}\times \\
\int_{0}^{\infty }\left( \sum_{j=0}^{n}\left( \frac{-ty}{1-t}\right)
^{j}L_{n-j}(x|(\beta +j))/j!\right) y^{\beta -1}\exp (-y)dy.
\end{gather*}%
Further, we have: 
\begin{gather*}
m_{n}(t)=\allowbreak \allowbreak \sqrt{\frac{n!}{\left( \beta \right)
^{\left( n\right) }}}\frac{1}{(1-t)^{\beta }}\frac{1}{\Gamma (\beta )}\times
\\
\sum_{j=0}^{n}\int_{0}^{\infty }(-1)^{j}\frac{t^{j}}{(1-t)^{j}}%
L_{n-j}(y\left\vert \left( \beta +j\right) \right\vert )y^{j+\beta -1}\exp
(-y)dy\allowbreak \\
=\allowbreak (-1)^{n}\sqrt{\frac{n!}{\left( \beta \right) ^{\left( n\right) }%
}}\frac{t^{n}\Gamma (\beta +n)}{\Gamma (\beta )n!(1-t)^{\beta +n}}%
\allowbreak =\allowbreak (-1)^{n}\sqrt{\frac{n!}{\left( \beta \right)
^{\left( n\right) }}}\frac{t^{n}\left( \beta \right) ^{\left( n\right) }}{%
n!(1-t)^{\beta +n}}.\allowbreak
\end{gather*}%
Hence we have now:%
\begin{gather*}
\allowbreak g(t)=E\exp (tX-tY)=\sum_{n\geq 0}\rho ^{n}\int_{0}^{\infty }\exp
(tx-ty)l_{n}(x|\beta )l_{n}(y|\beta )f_{g}(x|\beta )f_{g}(y|\beta
)dxdy\allowbreak \allowbreak \\
=\allowbreak \sum_{n\geq 0}\rho ^{n}m_{n}(t)m_{n}(-t)=\allowbreak
\allowbreak \sum_{n\geq 0}\rho ^{n}\frac{n!(-1)^{n}t^{2n}\left( \beta
\right) ^{\left( n\right) }\left( \beta \right) ^{\left( n\right) }}{\left(
\beta \right) ^{\left( n\right) }n!n!(1-t^{2})^{n+\beta }} \\
=\frac{1}{(1-t^{2})^{\beta }}\sum_{n\geq 0}\rho ^{n}\frac{%
(-1)^{n}t^{2n}\left( \beta \right) ^{\left( n\right) }}{n!(1-t^{2})^{n}} \\
=\frac{1}{(1-t^{2})^{\beta }}\sum_{n\geq 0}(-1)^{n}\left( \frac{\rho t^{2}}{%
1-t^{2}}\right) ^{n}\left( \beta \right) ^{\left( n\right) }/n!,
\end{gather*}%
and further we have:%
\begin{gather*}
g(t)\allowbreak =\allowbreak \frac{1}{(1-t^{2})^{\beta }}\left( 1+\frac{\rho
t^{2}}{1-t^{2}}\right) ^{-\beta }\allowbreak = \\
=\allowbreak \frac{1}{(1-t^{2}(1-\rho ))^{\beta }}=\sum_{n\geq
0}t^{2n}(1-\rho )^{n}\left( \beta \right) ^{\left( n\right) }/n!.
\end{gather*}%
We use here twice the well-known formula for the so-called binomial series.
Thus we deduce 
\begin{equation}
E(X-Y)^{n}\allowbreak =\allowbreak \left\{ 
\begin{array}{ccc}
0 & \text{if } & n\text{ is odd} \\ 
\frac{n!}{(n/2)!}\left( \beta \right) ^{\left( n/2\right) }(1-\rho )^{n/2} & 
\text{if } & n\text{ is even}%
\end{array}%
\right. .  \label{EXYG}
\end{equation}%
$\allowbreak $
\end{case}

\subsection{Proofs}

All proofs are based on considering different ways of calculating numbers $%
\left\{ E(X-Y)^{n}\right\} _{n\geq 1}$ when assuming one of the discussed
above joint distributions.

Proof of assertion i) of Theorem \ref{Main}:

Let $(X,Y)$ have bivariate Gaussian distribution given by (\ref{2xN}). Then
it is well known that every linear transformation of such vriable has also
gaussin distribution. In particular we deduce that $X-Y\sim N(0,(1-\rho
)^{2}).$ hence, keeping in mind that $EZ^{j}\allowbreak =\allowbreak \left\{ 
\begin{array}{ccc}
0 & \text{if} & j\text{ is odd} \\ 
1 & \text{if} & j=0 \\ 
\sigma ^{j}(2j-1)!! & \text{if} & j\text{ is even}%
\end{array}%
\right. $ if only $Z\sim N(0,\sigma ^{2})$. Thus the left hand side of
assertion i) is equal to $(2k-1)!!(1-\rho )^{k}\allowbreak =\allowbreak 
\frac{(2k)!}{k!}(1-\rho )^{k}.$ Now it is enough to change $\rho $ to $-\rho 
$.

In order to get right hand side we calculate: 
\begin{gather*}
E(X-Y)^{2k}\allowbreak =\allowbreak E(X+\rho Y-(1+\rho )Y)^{2k}\allowbreak =
\\
\allowbreak \sum_{j=0}^{2k}\binom{2k}{j}(-1)^{j}(1+\rho )^{2k-j}E(X+\rho
Y)^{j}Y^{2k-j}\allowbreak = \\
\allowbreak \sum_{j=0}^{2k}\binom{2k}{j}(-1)^{j}(1+\rho )^{2k-j}E(E(X+\rho
Y)^{j}|Y)Y^{2k-j}\allowbreak .
\end{gather*}%
Now, we use the fact know that random variables $(X+\rho Y)$ and $Y$ are
independent, hence $(E(X+\rho Y)^{j}|Y)\allowbreak =\allowbreak \left\{ 
\begin{array}{ccc}
0 & \text{if} & j\text{ is odd} \\ 
1 & \text{if} & j=0 \\ 
(2j-1)!!(1-\rho ^{2})^{j/2} & \text{if} & j\text{ is even}%
\end{array}%
\right. .$ Thus we get%
\begin{equation*}
E(X-Y)^{2k}=\sum_{j=0}^{k}\binom{2k}{2j}(1-\rho )^{2k-2j}(1-\rho
^{2})^{j}(2j-1)!!(2k-2j-1)!!.
\end{equation*}

Proof of assertion ii) of Theorem \ref{Main}:

We use (\ref{ExpH}) with (\ref{HjnN}) getting:

\begin{equation*}
\frac{(2k)!}{k!}(1-\rho )^{k}=2\sum_{j=0}^{k-1}(-1)^{j}\binom{2k}{j}%
\sum_{n=0}^{j}\frac{\rho ^{n}}{n!}H_{j,n}H_{2k-j,n}+(-1)^{k}\binom{2k}{k}%
\sum_{n=0}^{k}\frac{\rho ^{n}}{n!}H_{k,n}^{2},
\end{equation*}%
where $H_{j,n}=\left\{ 
\begin{array}{ccc}
0 & \text{if } & n>j\text{ or }j-n\text{ odd} \\ 
\frac{j!}{2^{(j-n)/2}((j-n)/2)!} & \text{if} & j-n\text{ is even}%
\end{array}%
\text{.}\right. $ Notice that if $j-n$ is even then $\binom{2k}{j}%
H_{j,n}H_{2k-j,n}\allowbreak =\allowbreak 2^{n}\binom{2k}{k}\binom{k}{n}%
\binom{k-n}{(j-n)/2}/2^{k}$ and when $k-n$ is even $H_{k,n}^{2}\allowbreak
=\allowbreak 2^{n}\binom{k}{n}\binom{k-n}{(k-n)/2}/2^{k},$ hence (\ref{G2})
can be reformulated in the following way:%
\begin{equation*}
\frac{1}{2^{k-1}}\sum_{j=0}^{k-1}(-1)^{j}\sum_{n=0-}^{j}(2\rho )^{n}\binom{k%
}{n}mh_{j,n,k}+\frac{(-1)^{k}}{2^{k}}\sum_{n=0}^{k}(2\rho )^{n}\binom{k}{n}%
mh_{k,n,k},=(1-\rho )^{k},
\end{equation*}%
where $mh_{j,n,k}\allowbreak =\allowbreak \left\{ 
\begin{array}{ccc}
0 & \text{if} & j>k-1\text{ or }j-n\text{ is odd} \\ 
\binom{k-n}{(j-n)/2} & \text{if } & j-n\text{ is even}%
\end{array}%
.\right. $ Now it remains to change the order of summation in the internal
sums and $\rho $ to $-\rho $.

Proof of assertion iii) of Theorem \ref{Main}:

Let us recall (\ref{defL}) and (\ref{defH}) and let us calculate $H_{j,n}$
directly, getting:%
\begin{gather*}
H_{j,n}=\frac{\sqrt{n!}}{\Gamma (\beta )\sqrt{\Gamma (n+\beta )}}%
\sum_{k=0}^{n}\frac{(-1)^{k}}{k!}\frac{(\beta )^{(n)}}{(n-k)!(\beta )^{(k)}}%
\int_{0}^{\infty }x^{k+j}x^{\beta -1}\exp (-x)dx \\
=\frac{\sqrt{n!}}{\sqrt{\Gamma (n+\beta )}}\sum_{k=0}^{n}\frac{(-1)^{k}}{k!}%
\frac{(\beta )^{(n)}(\beta )^{(k+j)}}{(n-k)!(\beta )^{(k)}}.
\end{gather*}%
Now, we compare it with the calculated already $H_{j,n}$ for Gamma
distribution i.e., $(-1)^{n}\binom{j}{n}(\beta )^{(j)}\sqrt{\frac{n!}{\left(
\beta \right) ^{\left( n\right) }}}.$

Proof of assertion iv) of Theorem \ref{Main}:

Recall (\ref{HjnG}). We have:%
\begin{gather*}
\left\{ 
\begin{array}{ccc}
0 & \text{if } & n\text{ is odd} \\ 
\frac{n!}{(n/2)!}\left( \beta \right) ^{\left( n/2\right) }(1-\rho )^{n/2} & 
\text{if } & n\text{ is even}%
\end{array}%
\right. =E(-X+Y)^{n}\allowbreak \\
=\sum_{m=0}^{n}(-1)^{m}\binom{n}{m}EX^{m}Y^{n-m}.\allowbreak
\end{gather*}%
Now we use (\ref{ExpH}) with $H_{j,m}$ given by (\ref{HjnG}). By (\ref{EXYG}%
) we have 
\begin{gather*}
E(-X+Y)^{n}=\sum_{m=0}^{n}(-1)^{m}\binom{n}{m}\sum_{j=0}^{\min (m,n-m)}\rho
^{j}H_{m,j}H_{n-m,j} \\
=\sum_{m=0}^{n}(-1)^{n-m}\binom{n}{m}\sum_{j=0}^{\min (m,n-m)}\rho ^{j}%
\binom{m}{j}\binom{n-m}{j}\left( \beta \right) ^{\left( m\right) }\left(
\beta \right) ^{\left( n-m\right) }\frac{j!}{\left( \beta \right) ^{\left(
j\right) }} \\
=\sum_{m=0}^{n}(-1)^{n-m}\binom{n}{m}\left( \beta \right) ^{\left( m\right)
}\left( \beta \right) ^{\left( n-m\right) }\sum_{j=0}^{m}\binom{m}{j}\binom{%
n-m}{j}\frac{j!\rho ^{j}}{\left( \beta \right) ^{\left( j\right) }} \\
=\left\{ 
\begin{array}{ccc}
0 & \text{if } & n\text{ is odd} \\ 
\frac{n!}{(n/2)!}\left( \beta \right) ^{\left( n/2\right) }(1-\rho )^{n/2} & 
\text{if } & n\text{ is even}%
\end{array}%
\right. .
\end{gather*}%
This is so, since $\binom{m}{j}\binom{n-m}{j}$ is zero whenever $m>j$ or $%
n-m>j.$

Proof of assertion iv) of Theorem \ref{Main}:

One can easily notice, basing on (\ref{MG}), that $E(l_{n}(X)|Y)\allowbreak
=\allowbreak \rho ^{n}l_{n}(Y).$ Let us hence calculate first, the
conditional moments $\eta _{n}(y|\beta ,\rho )\allowbreak =\allowbreak
E(X^{n}|Y=y)$. We have by (\ref{xL}) 
\begin{eqnarray*}
\eta _{j}(y|\beta ,\rho ) &=&E(X^{j}|Y=y)\allowbreak =\allowbreak
j!\sum_{k=0}^{j}(-1)^{k}\frac{(\beta )^{(j)}}{(j-k)!(\beta )^{(k)}}\rho
^{k}L_{k}^{(\beta )}(y) \\
&=&j!\sum_{k=0}^{j}(-1)^{k}\frac{(\beta )^{(j)}}{(j-k)!(\beta )^{(k)}}\rho
^{k}\sum_{m=0}^{k}(-1)^{m}\frac{(\beta )^{(k)}}{(k-m)!(\beta )^{(m)}}y^{m}/m!
\\
&=&j!\sum_{m=0}^{j}\frac{(\beta )^{(j)}}{(\beta )^{(m)}(j-m)!m!}(\rho
y)^{m}\sum_{k=m}^{j}(-1)^{k-m}\frac{(j-m)!}{(k-m)!(j-k)!}\rho ^{k-m} \\
&=&\sum_{m=0}^{j}\binom{j}{m}\frac{(\beta )^{(j)}}{(\beta )^{(m)}}(\rho
y)^{m}(1-\rho )^{j-m}=j!(1-\rho )^{j}L_{j}(-\frac{\rho y}{1-\rho }).
\end{eqnarray*}

Further we get:%
\begin{gather*}
E\left( (X-Y)^{n}|Y=y\right) =\sum_{j=0}^{n}(-1)^{n-j}\binom{n}{j}\eta
_{j}(y|\beta ,\rho )y^{n-j} \\
=\sum_{j=0}^{n}(-1)^{n-j}y^{n-j}\binom{n}{j}\sum_{m=0}^{j}\binom{j}{m}\frac{%
(\beta )^{(j)}}{(\beta )^{(m)}}(\rho y)^{m}(1-\rho )^{j-m} \\
=\sum_{m=0}^{n}\frac{n!}{m!(n-m)!}\frac{(\rho y)^{m}}{(\beta )^{(m)}}%
\sum_{j=m}^{n}(-1)^{n-j}\frac{(n-m)!(\beta )^{(j)}y^{n-j}(1-\rho )^{j-m}}{%
(n-j)!(j-m)!} \\
\sum_{m=0}^{n}\frac{n!}{m!(n-m)!}(\rho y)^{m}\sum_{s=0}^{n-m}(-1)^{n-m-s}%
\binom{n-m}{s}y^{n-m-s}(1-\rho )^{s}(\beta +m)^{(s)} \\
=\sum_{t=0}^{n}(-1)^{n-t}y^{n-t}\frac{n!}{t!(n-t)!}\sum_{s=0}^{t}\binom{t}{s}%
(\rho y)^{t-s}(1-\rho )^{s}(\beta +t-s)^{(s)}
\end{gather*}%
\begin{gather*}
=\sum_{s=0}^{n}\binom{n}{s}y^{n-s}(1-\rho )^{s}\sum_{t=s}^{n}(-1)^{n-t}\frac{%
(n-s)!}{(t-s)!t!}\rho ^{t-s}(\beta +t-s)^{(s)} \\
=\sum_{s=0}^{n}\binom{n}{s}y^{n-s}(1-\rho )^{s}\sum_{m=0}^{n-s}(-1)^{n-m-s}%
\binom{n-s}{m}\rho ^{m}(\beta +m)^{(s)}.
\end{gather*}

Hence we get using (\ref{momG}) we have; 
\begin{gather*}
E(X-Y)^{n}=\int_{0}^{\infty }(\sum_{s=0}^{n}\binom{n}{s}y^{n-s}(1-\rho
)^{s}\times \\
\sum_{m=0}^{n-s}(-1)^{n-m-s}\binom{n-s}{m}\rho ^{m}(\beta
+m)^{(s)})f_{g}(x|\beta )dx \\
=\sum_{s=0}^{n}\binom{n}{s}(\beta )^{(n-s)}(1-\rho
)^{s}\sum_{m=0}^{n-s}(-1)^{n-m-s}\binom{n-s}{m}\rho ^{m}(\beta +m)^{(s)} \\
=\sum_{t=0}^{n}(-1)^{n-t}\frac{n!(\beta )^{(n-t)}}{t!(n-t)!}\sum_{s=0}^{t}%
\frac{t!}{s!(t-s)!}(1-\rho )^{s}\rho ^{t-s}(\beta +n-t)^{(t-s)}(\beta
+t-s)^{(s)}.
\end{gather*}

In the last line, we have changed the order of summation. To get the
assertion we use the formula: 
\begin{equation*}
\left( \beta \right) ^{\left( n\right) }\left( \beta +n\right) ^{\left(
m\right) }=\left( \beta \right) ^{\left( n+m\right) },
\end{equation*}%
true for all complex $\beta $.

\end{document}